\documentclass[11pt,letterpaper]{amsart}
\usepackage{amsfonts, amsmath, amssymb, amscd, amsthm, graphicx,enumerate}
\usepackage{epsfig, color}
\usepackage{hyperref}

\hoffset -1.8cm \voffset -1.1cm \textwidth=6.5in \textheight=8.5in
\tolerance=9000 \emergencystretch=5pt \vfuzz=2pt
\parskip=1.5mm

\makeatletter
\def\blfootnote{\xdef\@thefnmark{}\@footnotetext}
\makeatother


\newtheorem{thm}{Theorem}[section]

\newtheorem{conj}[thm]{Conjecture}
\theoremstyle{definition}

\theoremstyle{remark}
\newtheorem{rem}[thm]{Remark}

\newfont{\eufm}{eufm10}


\renewcommand{\phi}{\varphi}

\begin{document}

\title{A finitely generated group that does not satisfy the generalized Burghelea Conjecture}

\author{A. Dranishnikov and M. Hull}

\date{}

\keywords{Burghelea conjecture}

\subjclass[2010]{20J05, 20F65}

\maketitle

\begin{abstract}
We construct a finitely generated group that does not satisfy the generalized Burghelea conjecture.
\end{abstract}


\section{Generalized Burghelea Conjecture}

In ~\cite{Bu}, Burghelea gave an explicit formula for the periodic cyclic homology of groups algebras with rational coefficients (and more generally with coefficients in fields of characteristic zero):

$$
PHC_\ast(\mathbb Q G)=\bigoplus_{[x]\in{\langle G\rangle}^{fin},n\ge 0}H_{2n+\ast}(N_x,\mathbb Q)\oplus \bigoplus_{[x]\in{\langle G\rangle}^{\infty}}T_\ast(x;\mathbb Q)
$$
where the group $T_*(x;\mathbb Q)=\lim_{\leftarrow}\{H_{*+2n}(N_x,\mathbb Q)\}$.

Here $G_x$ denoes the the centralizer of $x$ in $G$, $N_x=G_x/\langle x\rangle$ is the reduced centralizer, ${\langle G\rangle}^{fin}$ is the set of conjugacy classes of elements of finite order, and ${\langle G\rangle}^{\infty}$ the set of conjugacy classes of elements of infinite order. The bonding maps in the inverse sequences are the Gysin homomorphisms $S: H_{m+2}(N_x,\mathbb Q)\to H_{m}(N_x,\mathbb Q)$ corresponding to the fibration $B\langle x\rangle\simeq S^1\to BG_x\to BN_x$.
\begin{conj}[Generalized Burghelea Conjecture]
Let $G$ be a discrete group, then $T_*(x;\mathbb Q)=0$ for all $x\in{\langle G\rangle}^{\infty}$.
\end{conj}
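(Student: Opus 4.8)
The plan is to reduce the vanishing of $T_*(x,\mathbb{Q})$ to a statement purely about the Gysin inverse system and then to control that system. By the short exact sequence defining $T_*(x,\mathbb{Q})$, it suffices to prove that both
$$\lim_{\leftarrow}\{H_{*+2n}(N_x;\mathbb{Q}),\,S\}=0\quad\text{and}\quad {\lim_{\leftarrow}}^1\{H_{*-1+2n}(N_x;\mathbb{Q}),\,S\}=0,$$
so the entire problem becomes one about the pro-system assembled from the iterated Gysin maps. First I would record that, since every element of $G_x$ commutes with $x$ and $x$ has infinite order, $\langle x\rangle\cong\mathbb{Z}$ is central in $G_x$, giving a central extension $1\to\langle x\rangle\to G_x\to N_x\to 1$ classified by a class $e\in H^2(N_x;\mathbb{Z})$. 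Under the Gysin sequence of $S^1\to BG_x\to BN_x$, the bonding map $S\colon H_{m+2}(N_x;\mathbb{Q})\to H_m(N_x;\mathbb{Q})$ is cap product with the rational Euler class $e_{\mathbb{Q}}\in H^2(N_x;\mathbb{Q})$, so the whole tower is governed by iterated capping with $e_{\mathbb{Q}}$.

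With this dictionary in hand, I would dispose of the favorable cases. If $e_{\mathbb{Q}}=0$ (for instance when $e$ is a torsion class) then every bonding map vanishes and both $\lim$ and $\lim^1$ are zero immediately. More substantially, if $N_x$ has finite rational homological dimension $d$, then $H_{*+2n}(N_x;\mathbb{Q})=0$ once $*+2n>d$; the inverse system then has a vanishing tail, is pro-trivial, and again $\lim=\lim^1=0$. The same conclusion follows whenever capping with $e_{\mathbb{Q}}$ is eventually nilpotent on $H_*(N_x;\mathbb{Q})$, which would be the general target: establish a Mittag--Leffler condition for the $e_{\mathbb{Q}}$-tower.

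The main obstacle is precisely the general, infinite-dimensional situation, where none of these finiteness or nilpotence inputs is available. For an arbitrary finitely generated $G$ the reduced centralizer $N_x$ may carry nonzero rational homology in infinitely many degrees, and the action of $e_{\mathbb{Q}}$ need not be eventually trivial: if $H^*(N_x;\mathbb{Q})$ sustains polynomial-like behavior in $e_{\mathbb{Q}}$, the Gysin maps can be isomorphisms through an infinite cofinal range (forcing $\lim\neq 0$), and if the homology groups are infinite-dimensional the images can fail to stabilize (forcing $\lim^1\neq 0$). Thus the crux is that without a finiteness hypothesis on $N_x$ the $e_{\mathbb{Q}}$-tower need not be pro-trivial, and I would expect the conjecture to be unprovable in this generality---indeed, engineering a group whose reduced centralizer realizes such a non-pro-trivial Gysin tower is exactly the route to the counterexample announced in the title.
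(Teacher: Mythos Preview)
The statement you are addressing is a \emph{conjecture}, not a theorem, and the paper does not prove it; on the contrary, Theorem~\ref{main} exhibits a finitely generated group for which the Generalized Burghelea Conjecture fails. So there is no ``paper's own proof'' to compare against, and any attempt to prove the conjecture in full generality must fail.

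That said, your analysis is largely correct and in fact anticipates the paper's construction. You accurately rewrite the problem as the pro-triviality of the Gysin tower $\{H_{*+2n}(N_x;\mathbb{Q}),\,\,\cap\, e_{\mathbb{Q}}\}$, you correctly note that finite rational homological dimension of $N_x$ (or nilpotence of $e_{\mathbb{Q}}$) forces $T_*(x,\mathbb{Q})=0$, and you pinpoint the obstruction: if $H^*(N_x;\mathbb{Q})$ behaves like a polynomial algebra in $e_{\mathbb{Q}}$ then the Gysin maps are isomorphisms cofinally and $\varprojlim\neq 0$. This is exactly what the paper engineers: it uses a Kan--Thurston type construction to produce a group $G$ with $H^*(G;\mathbb{Q})\cong H^*(\mathbb{C}P^\infty;\mathbb{Q})$, forms the central $\mathbb{Z}$-extension $C$ classified by the degree-$2$ generator, and then embeds $C$ malnormally into a finitely generated $\Gamma$ so that $N_x=G$ for the central element $x$. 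Your closing paragraph essentially predicts this route.

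The genuine gap, then, is simply that your write-up is not a proof of the conjecture but rather a (sound) heuristic explaining why no such proof can exist without extra hypotheses. If the intent was to prove the conjecture, the approach is doomed; if the intent was to analyze it, you have arrived at the same conclusion as the paper, and the remaining work is precisely the group-theoretic input (Theorems~\ref{malnormal} and~\ref{k-t}) that realizes the counterexample.
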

Burghelea stated the above conjecture for groups $G$ which admit a finite $K(G, 1)$ ~\cite{Bu}. In the same paper Burghelea constructed a countable group that does not satisfy the Generalized Burghelea Conjecture. There are still no known counterexamples to the original version of Burgelea's conjecture, and it is known to hold for many classes of groups \cite{D, EM}.


The following is our main result.

\begin{thm}\label{main}
There is a finitely generated group $G$ that does not satisfy the Generalized Burghelea Conjecture.
\end{thm}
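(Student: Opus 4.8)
The plan is to reduce the finitely generated case to the countable case by means of a centralizer-preserving embedding. Since $T_*(x,\mathbb{Q})$ depends only on the isomorphism type of the central extension $1\to\langle x\rangle\to G_x\to N_x\to 1$ and its Euler class, it suffices to produce a finitely generated group $G$ containing an infinite-order element $x$ whose centralizer $G_x$, together with this extension, coincides with that of a known countable counterexample. For the latter I would take a variant of Burghelea's example, namely the infinite-dimensional integral Heisenberg group $H$: the central extension $1\to\mathbb{Z}\to H\to A\to 1$ with $A=\bigoplus_{i\ge 1}\mathbb{Z}^2$ and commutator pairing the standard symplectic form $\omega$. Here the generator $x$ of the center satisfies $H_x=H$ and $N_x=A$, and the Euler class of $S^1\to BH\to BA$ is $e=\sum_i\alpha_i\wedge\beta_i\in H^2(A;\mathbb{Q})$, so the Gysin map $S\colon H_{m+2}(A;\mathbb{Q})\to H_m(A;\mathbb{Q})$ is the contraction $\iota_e$ acting on $\Lambda^*\mathbb{Q}^{(\infty)}=H_*(A;\mathbb{Q})$.

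The key homological point is that $T_0(x,\mathbb{Q})\ne 0$ for this $H$. I would verify that $\iota_e\colon H_{2n+2}(A;\mathbb{Q})\to H_{2n}(A;\mathbb{Q})$ is surjective for every $n$: any class is supported on a finite-rank symplectic summand $\mathbb{Q}^{2k}$, and after enlarging $k$ the degree $2n$ lies at or below the middle dimension, where the Hard Lefschetz $\mathfrak{sl}_2$-representation on $\Lambda^\bullet\mathbb{Q}^{2k}$ makes $\iota_e$ surjective. Hence the inverse system $\{H_{2n}(A;\mathbb{Q}),S\}$ is an epimorphic tower with first term $H_0(A;\mathbb{Q})=\mathbb{Q}$, so $\lim_{\leftarrow}\{H_{2n}(A;\mathbb{Q})\}$ surjects onto $\mathbb{Q}$ and is nonzero. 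By the short exact sequence defining $T_*$ this forces $T_0(x,\mathbb{Q})\ne 0$.

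To upgrade this to a finitely generated group, I would embed the countable group $H$ into a $2$-generated group $G$ in such a way that $H$ is malnormal in $G$ (equivalently, hyperbolically embedded), using a Higman--Neumann--Neumann style embedding reinforced by small-cancellation relations. Granting such an embedding, the centralizer is preserved: if $g\in G_x$ then $x\in H\cap H^{g}$, and since $x$ has infinite order, malnormality forces $g\in H$, whence $G_x=H_x=H$. Therefore the extension $1\to\langle x\rangle\to G_x\to N_x\to 1$, its Euler class, and the entire Gysin tower are identical to those computed above, so $T_0(x,\mathbb{Q})\ne 0$ in $G$ and $G$ violates the Generalized Burghelea Conjecture.

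I expect the main obstacle to be the construction of the embedding itself: one must realize the infinitely many generators of $H$ as words in two generators while simultaneously guaranteeing malnormality of the embedded copy, so that no new element of $G$ commutes with $x$ and the central subgroup $\langle x\rangle$ is not enlarged. Controlling the intersections $H\cap H^{g}$ under a finite-generation embedding is the delicate part; I would handle it either through a graded small-cancellation presentation over $H*\langle s,t\rangle$ in the style of Olshanskii, choosing the defining relators with small cancellation so that the quotient is $2$-generated, contains $H$, and makes $H$ malnormal, or by analyzing the action on the Bass--Serre tree of an explicit HNN/amalgam realization in order to bound $G_x$. Once the embedding is in hand the homological computation is routine; the group-theoretic control of centralizers under the embedding is the crux of the argument.
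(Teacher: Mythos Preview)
Your overall architecture is the same as the paper's: build a countable torsion-free group $C$ with a central infinite-order element $x$ for which $T_*(x,\mathbb Q)\ne 0$, then embed $C$ malnormally into a finitely generated group so that the centralizer of $x$ is unchanged. The malnormal embedding you anticipate is exactly what the paper supplies (their Theorem~\ref{malnormal}, proved via Osin's small cancellation over relatively hyperbolic groups, i.e.\ the Olshanskii-style construction you describe); so the ``main obstacle'' you flag is already handled by the cited machinery and is not a gap.

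Where you diverge is in the choice of $C$. You take the infinite Heisenberg group, so that $N_x=\bigoplus_i\mathbb Z^2$ and the Gysin map is contraction by the symplectic form on $\Lambda^\bullet\mathbb Q^{(\infty)}$; your surjectivity argument via the $\mathfrak{sl}_2$-action (Hard Lefschetz on a large enough finite symplectic summand) is correct and yields $\varprojlim H_{2n}(N_x;\mathbb Q)\twoheadrightarrow\mathbb Q$. The paper instead invokes a Kan--Thurston/Baumslag--Dyer--Heller theorem to produce a torsion-free $G$ with $H_*(G;\mathbb Q)\cong H_*(\mathbb CP^\infty;\mathbb Q)$, and lets $C$ be the central $\mathbb Z$-extension classified by the generator of $H^2$; then $N_x=G$ and the Gysin map is cap with that generator, which is an \emph{isomorphism} $\mathbb Q\to\mathbb Q$ in every even degree, so the inverse limit is visibly $\mathbb Q$. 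Your route is more elementary in that it avoids the Kan--Thurston input and uses an explicit nilpotent group, at the price of the Lefschetz computation; the paper's route trades that computation for a black-box acyclic-map theorem and gets a one-line verification of $T_0\ne 0$.
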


Our strategy is to show that the countable group constructed by Burghelea can be embedded in a finitely generated group in a way that preserves centralizers. This embedding is based on the theory of small cancellation over relatively hyperbolic groups developed by Osin \cite{Osi10}.

\begin{rem}
Shortly after this paper was written the authors became aware that Engel and Marcinkowski also had constructions of finitely generated counterexamples to the generalized Burghelea Conjecture, including a finitely presented counterexample and a counterexample of type $F_\infty$, using completely different methods. These constructions have since been added to \cite{EM}.  
\end{rem}

\section{Malnormal Embeddings} For a torsion-free group $G$ hyperbolic relative to a subgroup $H$, a subgroup $S$ of $G$ is called \emph{suitable} if $S$ contains two infinite order elements $f$ and $g$ which are not conjugate to any elements of $H$ and such that no non-trivial power of $f$ is conjugate to a non-trivial power of $g$. This is equivalent to \cite[Definition 2.2]{Osi10} since $G$ is torsion-free. Indeed, the maximal virtually cylic subgroups containing $f$ and $g$ respectively are both cyclic, and since no power of $f$ is equal to a power of $g$ these cyclic subgroups must intersect trivially.

The following is a special case of \cite[Theorem 2.4]{Osi10}.
\begin{thm}\label{thm:scq}
Let $G$ be a torsion-free group hyperbolic relative to a subgroup $H$, let $t\in G$, and let $S$ be a suitable subgroup of $G$. Then there exists a group $\overline{G}$ and an epimorphism $\gamma\colon G\to\overline{G}$ such that:
\begin{enumerate}
\item $\gamma|_H$ is injective (so we identify $H$ with its image in $\overline{G}$).
\item $\overline{G}$ is hyperbolic relative to $H$.
\item $\gamma(t)\in\gamma(S)$.
\item $\gamma(S)$ is a suitable subgroup of $\overline{G}$.
\item $\overline{G}$ is torsion-free.
\end{enumerate}
\end{thm}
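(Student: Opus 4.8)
The plan is to read the statement as the single-element, torsion-free specialization of Osin's small-cancellation theorem \cite[Theorem 2.4]{Osi10} and to supply the one conclusion, torsion-freeness, that falls outside the standard ``suitable quotient'' package. Osin's theorem takes as input a relatively hyperbolic group $G$ with a suitable subgroup $S$ and a finite collection of elements, and produces a quotient $\gamma\colon G\to\overline{G}$ in which each chosen element lands in $\gamma(S)$, while $\gamma$ is injective on $H$, $\overline{G}$ stays hyperbolic relative to $H$, and $\gamma(S)$ stays suitable. Applying this with the single element $t$ yields conclusions (1), (2), (3), and (4) at once: injectivity on $H$, relative hyperbolicity over $H$, the membership $\gamma(t)\in\gamma(S)$, and preservation of suitability.

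To see the mechanism behind (3) and to control the quotient, I would exhibit the relator explicitly. Let $f,g\in S$ be the infinite-order elements witnessing suitability. Choosing rapidly growing exponents $n_1<n_2<\cdots<n_k$, form the word $W=fg^{n_1}fg^{n_2}\cdots fg^{n_k}\in S$ and set $R=t^{-1}W$, so that $\overline{G}=G/\ll R\rr$ with $\gamma$ the quotient map. The growth condition on the exponents is exactly what forces $R$, viewed as a relator over $G$, to satisfy the small-cancellation condition of \cite{Osi10} relative to $H$: because $f$ and $g$ are not conjugate into $H$ and no nontrivial powers of them are conjugate to one another, the $f,g$-blocks with incommensurable exponents cannot share long pieces, so distinct rotations and the inverse of $R$ overlap only in short subwords. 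In $\overline{G}$ the defining relation gives $\gamma(t)=\gamma(W)\in\gamma(S)$, which is (3), and the same small-cancellation estimates are what guarantee that $\gamma(f),\gamma(g)$ continue to witness suitability of $\gamma(S)$, which is (4).

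The remaining and most delicate point is (5). Here I would invoke the description of finite-order elements in such a quotient: in a suitable small-cancellation quotient of a relatively hyperbolic group, every finite subgroup of $\overline{G}$ is conjugate to the image of a finite subgroup of $G$. Provided the relator is not a proper power — and $R=t^{-1}W$ can be arranged to avoid this, since $t$ has infinite order and occurs exactly once in $R$ without cancelling into $W$ — the quotient introduces no new torsion. Combined with the torsion-freeness of $G$ and the injectivity of $\gamma|_H$, this forces every finite subgroup of $\overline{G}$ to be trivial, giving (5).

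I expect the torsion-control step to be the main obstacle, not because it is deep but because it is the one conclusion not packaged verbatim in \cite[Theorem 2.4]{Osi10}: one must pin down that the only possible source of torsion in such a quotient is a relator that is a proper power, and then verify that the chosen $R$ avoids this while simultaneously meeting the small-cancellation hypotheses. Everything else is a faithful translation of Osin's theorem to the present setting, so the bulk of the work is organizational rather than computational.
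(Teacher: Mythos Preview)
The paper does not prove this theorem at all; it simply records it as a specialization of \cite[Theorem~2.4]{Osi10} and moves on. Your sketch is correct and goes well beyond what the paper supplies.

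One small correction to your framing: conclusion~(5) is not in fact outside Osin's package. \cite[Theorem~2.4]{Osi10} already includes the clause that every element of finite order in $\overline{G}$ is the image of an element of finite order in $G$, so when $G$ is assumed torsion-free, (5) is immediate and no separate proper-power analysis of the relator is required. All five conclusions are thus read off from a single invocation of Osin's theorem with the singleton $\{t\}$, which is exactly how the paper treats it.
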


We inductively apply the previous theorem to construct the desired embedding. This can be extracted from the proof of \cite[Theorem 2.6]{Osi10}, but since it is not explicitly stated there we include the proof below.

Recall that a subgroup $H$ of a group $G$ is called \emph{malnormal} if for all $x\in G\setminus H$, $x^{-1}Hx\cap H=\{1\}$.
\begin{thm}\label{malnormal}
Let $H$ be a torsion-free countable group. Then there exists a finitely generated group $\Gamma$ which contains $H$ as a malnormal subgroup.
\end{thm}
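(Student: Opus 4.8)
The plan is to build the finitely generated group $\Gamma$ by iteratively applying Theorem~\ref{thm:scq}, using the relative small cancellation machinery to fold the countably many generators of $C$ into finitely many while keeping $C$ untouched as a subgroup. Let me think about how to set this up.

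Starting point: I need a torsion-free group that is hyperbolic relative to $C$, and that contains a suitable subgroup. The standard trick is to form the free product $G_0 = C * F$ where $F = F(a,b)$ is free of rank $2$. Then $G_0$ is hyperbolic relative to $C$ (free products are relatively hyperbolic relative to their free factors), it is torsion-free (free product of torsion-free groups), and $F$ itself — or the subgroup generated by $a,b$ — is a suitable subgroup, since $a$ and $b$ are infinite order, not conjugate into $C$, and no powers of $a$ and $b$ are conjugate.

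The enumeration and folding: list the elements of $C$ as $c_1, c_2, \dots$. I want to successively apply Theorem~\ref{thm:scq} with $t = c_i$ at stage $i$, forcing $\gamma(c_i)$ into the image of the suitable subgroup $S$ (which at each stage descends from $F$). The conclusions of the theorem are exactly engineered for this induction: conclusion (1) says $C$ embeds (the quotient is injective on $C$), (2) and (5) preserve the hypotheses (relative hyperbolicity with peripheral $C$, torsion-freeness) so the theorem can be reapplied, (3) says $c_i$ gets pulled into $S$, and (4) keeps a suitable subgroup available for the next step. So I would define a direct system $G_0 \to G_1 \to \cdots$ with $G_i = \overline{G_{i-1}}$ and let $\Gamma = \varinjlim G_i$.

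Finite generation and the embedding: I would check that $\Gamma$ is generated by the (images of the) two elements $a,b$, because at each stage every $c_i$ becomes expressible in terms of the suitable subgroup generated by two elements, and these two elements persist (their images generate $S$ at every stage). The map $C \to \Gamma$ is injective since each $\gamma_i|_C$ is injective and injectivity passes to the limit. The main obstacle is not finite generation but \emph{malnormality}: I have to argue $C$ sits malnormally in $\Gamma$. For this I would use that $C$ remains the peripheral subgroup of a relatively hyperbolic group at each finite stage, and peripheral subgroups of relatively hyperbolic groups are \emph{almost malnormal}; since the groups $G_i$ are torsion-free, almost malnormal (finite intersections of conjugates) upgrades to genuinely malnormal. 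The delicate point is passing this malnormality through the direct limit, so I would verify that a conjugate intersection $g^{-1}Cg \cap C$ being nontrivial in $\Gamma$ can be detected already in some finite stage $G_i$ (since any relation in $\Gamma$ involves finitely many elements and hence holds in some $G_i$), contradicting malnormality there.

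To summarize the structure: fix $\Gamma = \varinjlim G_i$ as above; prove injectivity of $C \hookrightarrow \Gamma$ by (1); prove finite generation from (3) and (4) via the persistent two-element suitable subgroup; and establish malnormality by transporting the almost-malnormality of peripheral subgroups (sharpened to malnormality using torsion-freeness) from the finite stages to the limit. The technical heart is confirming that malnormality is preserved in the limit and that conclusion (1) indeed yields an honest embedding rather than merely injectivity on the peripheral factor at each stage — but since the maps $\gamma_i$ are injective on $C$ and compatible, this follows from the standard fact that a directed union of injections is injective.
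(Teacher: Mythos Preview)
Your proposal is correct and follows essentially the same route as the paper: form $G_0=C*F(a,b)$, iterate Theorem~\ref{thm:scq} with $t=c_i$ and $S$ the image of $F$, take the direct limit, and verify finite generation, injectivity on $C$, and malnormality exactly as you describe. The only cosmetic difference is that the paper cites \cite[Lemma~8.3b]{Osi10} directly for malnormality of the peripheral subgroup in each $G_i$, whereas you phrase it as ``almost malnormal plus torsion-free implies malnormal''; these are the same observation.
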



\begin{proof}
Let $H=\{1=h_0, h_1, h_2,...\}$. We inductively define a sequence of quotients as follows: Let $G_0=H\ast F$, where $F=F(x, y)$ is the free group  on $\{x, y\}$. Then $G_0$ is torsion-free, hyperbolic relative to $H$, and $F$ is a suitable subgroup of $G_0$. Let $\alpha_0\colon G_0\to G_0$ be the identity map. Suppose now we have constructed a torsion-free group $G_i$ together with an epimorphism $\alpha_i\colon G_0\to G_i$ such that:
\begin{enumerate}
\item $\alpha_i|_H$ is injective (so we identify $H$ with its image in $G_i$)
\item $G_i$ is hyperbolic relative to $H$.
\item $\alpha_i(h_j)\in\alpha_i(F)$ for all $0\leq j\leq i$.
\item $\alpha_i(F)$ is a suitable subgroup of $G_i$.
\item $G_{i}$ is torsion-free.
\end{enumerate}

Given such $G_i$, we can apply Theorem \ref{thm:scq} to $G_i$, $H$, $t=h_{i+1}$, and $S=\alpha_i(F)$. Let $G_{i+1}=\overline{G_i}$ be the quotient provided by Theorem \ref{thm:scq}. Define $\alpha_{i+1}=\gamma\circ\alpha_i$, where $\gamma$ is the epimorphism given by Theorem \ref{thm:scq}. Then Theorem \ref{thm:scq} implies that $\alpha_i\colon G_i\to G_{i+1}$ satisfies conditions (1)--(5).

Let $\Gamma$ be the direct limit of the sequence $G_0\to G_1\to G_2...$, that is $\Gamma=G_0/\bigcup\ker(\alpha_i)$. Let $\beta\colon G_0\to \Gamma$ be the natural quotient map. Note that $\beta|_F$ is surjective by construction. Indeed, $G_0$ is generated by $H\cup\{x, y\}$ and for each $h_i\in H$, $\alpha_i(h_i)\in \alpha_i(F)$, hence $\beta(h_i)\in\beta(F)$. Thus $\Gamma$ is generated by $\{\beta(x), \beta(y)\}$.

Now $\beta|_H$ is injective, so $H$ embeds in $\Gamma$; we identify $H$ with its image in $\Gamma$. Suppose $x\in \Gamma$ such that $x^{-1}Hx\cap H\neq\{1\}$. Then there exist $g, h\in H\setminus\{1\}$ such that $x^{-1}gxh^{-1}=1$. Let $\tilde{x}\in G_0$ such that $\beta(\tilde{x})=x$. Then for some $i\geq 1$, $\tilde{x}^{-1}g\tilde{x}h^{-1}\in\ker\alpha_i$. This means that $\alpha_i(\tilde{x})^{-1}H\alpha_i(\tilde{x})\cap H\neq\{1\}$. Since $G_i$ is hyperbolic relative to $H$, $H$ is malnormal in $G_i$ by \cite[Lemma 8.3b]{Osi10}. Hence $\alpha_i(\tilde{x})\in H$, which means that $x=\beta(\tilde{x})\in H$. Therefore $H$ is malnormal in $\Gamma$. 
\end{proof}

\section{Proof of Theorem \ref{main}}

\begin{proof}[Proof of Theorem \ref{main}]
We start by reviewing the counterexample constructed by Burghelea. By the Kan-Thurston theorem \cite{BDH, KT}, there exists a group $G$ and a map $t\colon K(G, 1)\to\mathbb CP^\infty$ which induces an isomorphism on homology and cohomology. Burghelea observes that the group $G$ can be chosen to be torsion-free. The idea behind this observation is that since $\mathbb CP^\infty=\bigcup \mathbb CP^n$, $K(G, 1)$ can be constucted inductively as a union of the form $K(G, 1)=\bigcup K(G_i, 1)$, where $K(G_i, 1)=t^{-1}(\mathbb CP^n)$ and each $K(G_i, 1)$ is a finite CW-complex (see, for example, the proof in~\cite{Ma} of the Kan-Thurston theorem). Since $G=\bigcup G_i$ and each $G_i$ is torsion-free, $G$ is also torsion-free.


Note that $H_{2n}(\mathbb C P^\infty;\mathbb Q)\cong \mathbb Q$ and the Gysin homomorphism $S: H_{2n+2}(\mathbb C P^\infty;\mathbb Q)\to H_{2n}(\mathbb C P^\infty;\mathbb Q)$  for the canonical $S^1$-bundle $S^\infty\to \mathbb C P^\infty$ is an isomorphism, hence $\lim_{\leftarrow}\{H_{2n}(\mathbb C P^\infty;\mathbb Q),  S\}\cong\mathbb Q$.

 Let $$1\to\mathbb Z=\langle x\rangle\to H\to G\to 1$$ be the central extension extension that corresponds to a generator $a\in H^*(G)=\mathbb Z[a]$, $deg(a)=2$. Note that $H\cong\pi_1(Y)$, where $Y$ is the pull-back of the bundle $S^\infty\to \mathbb CP^\infty$ along $t$. Hence $N_x=H/\langle x\rangle\cong G$, and $T_0(x;\mathbb Q)\cong\lim_{\leftarrow}\{H_{2n}(G, \mathbb Q)\}\cong\lim_{\leftarrow}\{H_{2n}(\mathbb C P^\infty;\mathbb Q),  S\}\cong\mathbb Q$. $H$ is the group constructed by Burghelea.

We now apply Theorem~\ref{malnormal} to obtain a malnormal embedding $H\to \Gamma$ into a finitely generated group $\Gamma$, and we identify $H$ with its image in $\Gamma$. Since $H$ is malnormal, no elements of $\Gamma\setminus H$ will centralize $x$. Hence $\Gamma_x= H$ and $N_x\cong G$. Then as before, we get that $T_0(x;\mathbb Q)\cong\mathbb Q\ne 0$ in the group $\Gamma$.
\end{proof}


\vspace{1cm}

\end{document}